\newtheorem{theorem}{Theorem}[section]
\newtheorem{proposition}[theorem]{Proposition}
\newtheorem{corollary}[theorem]{Corollary}
\newtheorem{lemma}[theorem]{Lemma}
\theoremstyle{definition}
\newtheorem{definition}[theorem]{Definition}
\newtheorem{example}[theorem]{Example}
\newtheorem{remark}[theorem]{Remark}
\newcommand{\bbZ}{\mathbb{Z}}
\newcommand{\cC}{\mathcal{C}}
\newcommand{\cD}{\mathcal{D}}
\newcommand{\cG}{\mathcal{G}}
\newcommand{\bP}{\mathbb{P}}
\newcommand{\bQ}{\mathbb{Q}}
\DeclareMathOperator*{\hocolim}{hocolim}
\newcommand{\Hom}{\mathrm{Hom}}
\newcommand{\id}{\mathrm{id}}
\newcommand{\diag}{\mathrm{diag}}
\newcommand{\Ext}{\mathrm{Ext}}
\newcommand{\Tot}{\mathrm{Tot}}
\def\maprt#1{\smash{\,\mathop{\longrightarrow}\limits^{#1}\,}}
\newcommand{\addresseshere}{%
  \enddoc@text\let\enddoc@text\relax
}
\begin{document}
 
\title{Thomason Cohomology and Quillen's Theorem~A}
\author{Mehmet Kırtışoğlu}
\author{Ergün Yalçın$^{\dagger}$}
 
\date{}
\keywords{Cohomology of small categories, Baues-Wirsching cohomology, Thomason cohomology, Grothendieck construction,  Quillen's Theorem A}
 
\thanks{2020 {\it Mathematics Subject Classification.}  Primary: 18G90;  Secondary: 55U10, 18G30, 18G35, 18G40}

\thanks{\emph{${}^\dagger$Corresponding Author:} Erg\" un Yal\c c\i n, Email: yalcine@fen.bilkent.edu.tr \\
Address: Bilkent University, Department of Mathematics, 06800, Bilkent, Ankara, T\" urkiye}
\thanks{\emph{Contributing Author:} Mehmet K\i rt\i \c so\u glu,  Email: m.kirtisoglu@bilkent.edu.tr \\
Address: Bilkent University, Department of Mathematics, 06800, Bilkent, Ankara, T\" urkiye}

\begin{abstract}
Given a functor $\varphi : \cC \to \cD$ between small categories, Quillen showed that there is a homotopy equivalence $\kappa:\hocolim_{\cD} N(\varphi /-) \to N\cC$, where $N(\varphi/-)$ is the functor that sends each object $d$ of $\cD$ to the nerve of the comma category $\varphi/d$.  
We show that the homotopy equivalence $\kappa$ induces an isomorphism on the Gabriel-Zisman cohomology of the associated simplicial sets. 
As a consequence, we obtain a version of Quillen's Theorem A for the Thomason cohomology of categories. We also construct a spectral sequence converging to the Thomason cohomology of the Grothendieck construction $\int_{\cD} F$ of a functor $F:\cD \to \mathbf{Cat}$ with arbitrary coefficients.
\end{abstract}

\maketitle
\setcounter{tocdepth}{1}
\tableofcontents


\section{Introduction and Statements of Results}\label{sect:Introduction}

Let $\cC$ be a small category. Given a coefficient system $M$ on its nerve $N\cC$, the Gabriel--Zisman cohomology of the simplicial set $N\cC$ is called the
\emph{Thomason cohomology} of $\cC$ with coefficients in $M$, and is denoted by
$H^{\bullet} _{\mathrm{Th}}(\cC;M)$. This theory was introduced by G\' alvez-Carrillo, Neumann, and Tonks in \cite{GNT-Thomason-2013} and it includes, as special cases, the ordinary cohomology of small categories
and the Baues--Wirsching cohomology of categories with coefficients in natural systems.

For a functor $\varphi \colon \cC \to \cD$, Quillen \cite{Quillen-KTheory-1973} 
proved that there is a natural weak
homotopy equivalence
\[
\kappa \colon \operatorname{hocolim}_{\cD} N(\varphi/{-}) \longrightarrow N\cC,
\]
where $\varphi/{-}$ is the functor that assigns to each object $d$ of $\cD$ the
comma category $\varphi/d$.
This result is a key ingredient in Quillen’s Theorem~A. Here the homotopy colimit $\hocolim_{\cD} N(\varphi/-)$ is the diagonal of the simplicial replacement of the functor $N(\varphi/-)$.

In general a weak homotopy equivalence between two simplicial sets does not induce an isomorphism between their Gabriel-Zisman cohomology groups with arbitrary coefficients. In this paper we show that the weak homotopy equivalence $\kappa$ induces an isomorphism between the corresponding Gabriel-Zisman cohomology groups. The main theorem of the paper is the following:
\begin{theorem}\label{thm:Main1}
Let $\varphi : \cC \to \cD$ be a functor between two small categories. For every coefficient system $M \colon \Delta(N \cC) \to R\text{-Mod}$, the simplicial map $\kappa$ induces an isomorphism
\[
\kappa ^\ast \colon H^\bullet_{Th}(\cC; M)
\;\cong\;
H^\bullet\bigl(\mathrm{hocolim}_{\cD} N(\varphi/{-}); \kappa^\ast M\bigr).
\]
\end{theorem}

Theorem \ref{thm:Main1} is proved by constructing a double complex of projective $R\Delta (N\cC)$-modules whose total complex gives a projective resolution for the constant functor $\underline R$ as an $R\Delta (N\cC)$-module. This idea of constructing a projective resolution as a double complex was also used by Cegarra in the proof of \cite[Theorem 1]{Cegarra-2020}. As a special case of Theorem \ref{thm:Main1}, we obtain that the simplicial map $\kappa$ induces isomorphisms for the Baues-Wirsching cohomology and the ordinary cohomology of $\cC$.

As an application of Theorem \ref{thm:Main1}, we prove a version of Quillen's Theorem A for Thomason cohomology. For every object $d$ of $\cD$, let  $u : \varphi/d \to \id_{\cD}/d$ be the functor that sends each object $(c, \mu)$ of $\varphi/d$ to $(\varphi (c), \mu)$ in $\id_{\cD} /d$, and $j: \varphi/d \to \cC$ be the forgetful functor that sends each object $(c, \mu)$ to  $c$. For every object $d$ of $\cD$, there is a commuting diagram of functors
\[
\xymatrix{ \varphi/d \ar[r]^{j} \ar[d]_{u}  & \cC \ar[d]^{\varphi} \\
\id _{\cD}/d \ar[r]^{j} & \cD.}
\]
 
\begin{theorem}\label{thm:Main2-QuillenA}
Let $\varphi : \cC \to \cD$ be a functor and $M$ be any coefficient system for $N\cD$. Suppose that
for each object $d$ of $\cD$, the homomorphism 
\[
u^* : H^* _{Th} ( \id_{\cD}/d ; j ^* M  )\cong H^*_{Th}  ( \varphi/d ; j ^* \varphi^*M)
\]
is an isomorphism. Then the functor $\varphi$ induces an isomorphism 
\[
\varphi ^* : H^* _{Th} (\cD ; M) \to H^* _{Th} (\cC; \varphi ^* M ).
\]
\end{theorem}

Proof of Theorem \ref{thm:Main2-QuillenA} follows from a spectral sequence that we construct for every functor $\varphi : \cC \to \cD$ which converges to the Thomason cohomology $H^\bullet _{Th} (\cC; M)$ and whose $E_2$-term can be expressed in terms of the cohomology of $\cD$. As a special case of this spectral sequence we obtain a spectral sequence for the Thomason cohomology of the Grothendieck construction $\int _{\cD} F$ of a functor $F : \cD \to \mathbf{Cat}$.

Let $F: \cD\to \mathbf{Cat}$ be a functor and $\int _{\cD} F$ denote the Grothendieck construction of $F$. The objects of $\int_{\cD} F$ are the pairs $(d,x)$ where $d$ is an object of $\cD$ and $x$ is an object of $F(d)$. There is a forgetful functor
\[
\pi:\int_{\cD} F\to \cD
\]
that sends every object $(d,x)$ of $\int_{\cD} F$ to $d$. For every object $d$ in $\cD$, let $j: \pi/d \to \int _{\cD} F$ be the functor that sends a pair $((d', x'), \mu)$ to $(d',x')$ in $\int_{\cD} F$.

\begin{theorem}\label{thm:GrothSpectSeq} 
For every coefficient system $M:\Delta\bigl(N\!\int_D F\bigr)\to R\text{-}\mathrm{Mod}$, there is a first quadrant
spectral sequence
\[
E^{p,q}_2
=
H^p\bigl(\cD^{op}; H^q_{\mathrm{Th}}(\pi/-; j^*M)\bigr)
\;\Longrightarrow\;
H^{p+q}_{Th}\!\left(\int_{\cD} F;M\right).
\]
\end{theorem}

A similar spectral sequence that converges to the Baues-Wirsching cohomology of the Grothendieck construction $\int_{\cD} F$ was constructed earlier by Pirashvili and Redondo \cite{Pirashvili-Redondo-2006} and by Cegarra \cite{Cegarra-2020}. Theorem \ref{thm:GrothSpectSeq} can be considered as a generalization of these spectral sequences constructed by Pirashvili-Redondo and Cegarra.

\vskip 5 pt

\emph{Notation and Conventions:} All the categories are assumed to be small (or equivalent to a small category). 
Throughout the paper $R$ denotes a commutative ring with unity.


\section{Cohomology of Small Categories}\label{sect:Prelim}

 This is a preliminary section where we introduce necessary definitions on the cohomology of categories and the Gabriel-Zisman cohomology of simplicial sets.  For more details on this material we refer the reader to 
\cite{Quillen-KTheory-1973}, \cite{Yalcin-2024}, \cite{Gabriel-Zisman-Book-1967}, \cite{Richter-Book-2020},
and \cite{GNT-Thomason-2013}.


\subsection{Ordinary Cohomology of Categories}\label{sect:CohCat}

Let $\cC$ be a small category and $R$ a commutative ring.
A functor $M: \cC \to R$-Mod is called a (covariant) \emph{$R\cC$-module}. The category of $R\cC$-modules is an abelian category where the exactness of a sequence $M_1\to M_2 \to M_3$ in $R\cC$-Mod is defined by the component-wise exactness of the corresponding $R$-modules $M_1(c) \to M_2(c) \to M_3(c)$ for every object $c$ of $\cC$. 

There is an explicit construction of projective $R\cC$-modules 
using the Yoneda Lemma:   For every object $x$ of $\cC$, let $R\Hom _{\cC} (x, -)$ denote the functor $\cC \to R$-Mod that sends an object $c$ of $\cC$ to the free $R$-module with basis $\Hom _{\cC} (x, c)$. By the Yoneda Lemma,
for every $R\cC$-module $M$, 
\[
\Hom _{R\cC} (R\Hom _{\cC} (x, -), M ) \cong M(x).
\]
Using this isomorphism, one can show that for every object $x$ of $\cC$, the $R\cC$-module $R \Hom_{\cC} (x, -)$ is projective
(see \cite{Lueck-Book-1989}, \cite{Webb-Survey-2007} for details). Having enough projective modules allows us to construct projective resolutions and define ext-groups over $R\cC$-modules.
 
For a small category $\cC$ and a commutative ring $R$, the constant functor $\underline R$ is the $R\cC$-module 
such that for every object $c$ of $\cC$, $\underline R (c)=R$, and for every morphism $\alpha : c\to c'$ in $\cC$, the induced map $\underline R (\alpha) : R \to R$ is the identity map.

\begin{definition} The \emph{(ordinary) cohomology of a small category} $\cC$ with coefficients in an $R\cC$-module $M$ is defined by 
\[
H^* (\cC; M) := \Ext ^* _{R\cC} (\underline R , M).
\]
\end{definition}

One can also define the ordinary cohomology of a small category $\cC$ as a special case of the Gabriel-Zisman cohomology of its nerve $N\cC$ (see Section \ref{sect:ThCoh}).


\subsection{Gabriel-Zisman Cohomology of Simplicial Sets}\label{sect:CohSimpSet}

The \emph{simplex category} $\Delta$ is the category whose objects are ordered finite sets $[n]=\{ 0, 1, \dots, n\}$, and whose morphisms are given by weakly order-preserving functions between them. Morphisms in the simplex category are generated by coboundary maps $d^i : [n-1] \to [n]$, $0\leq i\leq n$, and codegeneracy maps $s^i: [n+1] \to [n]$,  $0\leq i \leq n$, which satisfy certain relations (see \cite{Goerss-Jardine-Book-2009} for details). 

A functor $X: \Delta ^{op} \to \cC$ is called a simplicial object in $\cC$. 
If $\cC$ is the category of $R$-modules, then $X$ is called a simplicial $R$-module. The homology of a simplicial $R$-module is defined as the cohomology of its Moore complex. The cohomology of a cosimplicial $R$-module is defined as the cohomology of the associated Moore cochain complex.

A simplicial object in Sets is called a simplicial set. When $X$ is a simplicial set, for each $n\geq 0$, we denote $X[n]$ by $X_n$, and call it the set of $n$-simplices. For every morphism $f:[m]\to [n]$ in $\Delta$, we denote $X(f): X_n\to X_m$ by $f^*$. For each $i\in [n]$, the face maps and degeneracy maps are denoted by $d_i: X_n \to X_{n-1}$ and $s_i: X_n \to X_{n+1}$.

\begin{definition}\label{def:CoeffSystem}  Given a simplicial set $X$, the \emph{category of simplices of $X$} is the category $\Delta(X)$ whose objects are the pairs $([n], \sigma)$ where $\sigma \in X_n$, and the morphisms $f: ([m], \tau)\to ([n], \sigma)$ in $\Delta(X)$ are given by the morphisms $f: [m]\to [n]$ in $\Delta$ such that $f^* \sigma =\tau$. A functor $M: \Delta (X) \to R$-Mod is called a \emph{coefficient system} on $X$.
\end{definition}

We write $M (\sigma )$ for $M ([n], \sigma)$, and for each morphism $f: ([m], \tau) \to ([n], \sigma)$, the induced map $M (\tau) \to M (\sigma)$ is denoted by $M(f)$. In particular, for each $i$, the coboundary and codegeneracy maps induce $R$-module homomorphisms 
\[
M(d^i): M (d_i \sigma ) \to M (\sigma) \quad \text{ and } \quad M(s^i)  : M (s_i \sigma ) \to M (\sigma).
\]
A coefficient system is called a \emph{local coefficient system} if for every morphism $f: ([m], \tau) \to ([n], \sigma)$ in $\Delta (X)$, the induced homomorphism $M(f): M (\tau) \to M (\sigma)$ is an isomorphism.

\begin{definition}\label{def:GabrielZisman} Let $X$ be a simplicial set and $M$ a coefficient system on $X$. Consider the cosimplicial $R$-module $C (X; M)$ such that for each $n\geq 0$, 
\[
C^n (X; M )=\{ \zeta : X_n \to \prod _{\sigma \in X_n} M(\sigma)  \, | \, \zeta (\sigma ) \in M (\sigma) \} 
\]
where for each $f: [m]\to [n]$, the induced map $f^*: C^m(X; M) \to C^n (X; M)$ is defined by 
\[
f^* (\zeta ) (\sigma)= M(f) \bigl ( \zeta ( f^* \sigma ) \bigr )
\]
for every $\zeta \in C^m (X; M)$ and $\sigma \in X_n$. The \emph{Gabriel-Zisman cohomology} $H^\bullet (X; M)$ is defined to be the cohomology of the cosimplicial module $C (X; M)$.
\end{definition}

Alternatively, one can define the cohomology group $H^\bullet (X; M)$ as the ordinary cohomology of the simplex category $\Delta (X)$ with coefficients in the $R\Delta (X)$-module $M$. These two definitions give isomorphic cohomology groups (see Gabriel–Zisman \cite[Appendix II]{Gabriel-Zisman-Book-1967}).


\subsection{Induced Maps on Cohomology}

Let $\lambda: X\to Y$ be a simplicial map between two simplicial sets. The map $\lambda$ induces a functor $\Delta(\lambda): \Delta (X)\to \Delta (Y)$ between corresponding simplex categories. For every coefficient system $M : \Delta (Y) \to R$-Mod on $Y$, the composition 
\[ 
\lambda ^* M : \Delta (X) \maprt{\Delta (\lambda)} \Delta (Y) \maprt{M } R\text{-Mod}
\]
is a coefficient system on $X$.

\begin{lemma}\label{lem:InducedHom}
The simplicial map $\lambda: X \to Y$ induces an $R$-module 
homomorphism 
\[
\lambda ^* : H^\bullet (Y; M ) \to H^\bullet (X; \lambda^* M )
\]
for every coefficient system $M$ on $Y$.
\end{lemma}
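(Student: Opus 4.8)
The plan is to produce the induced map at the level of cochain complexes and then pass to cohomology. First I would recall that a simplicial map $\lambda : X \to Y$ gives, for each $n \geq 0$, a function $\lambda_n : X_n \to Y_n$ compatible with all the face and degeneracy maps, so that for every $f : [m] \to [n]$ in $\Delta$ we have $f^*\circ\lambda_n = \lambda_m\circ f^*$. This compatibility is exactly what makes $\Delta(\lambda) : \Delta(X)\to\Delta(Y)$, sending $([n],\sigma)\mapsto([n],\lambda_n\sigma)$, a well-defined functor, and hence $\lambda^*\cM = \cM\circ\Delta(\lambda)$ a genuine coefficient system for $X$ as stated in the paragraph preceding the lemma.

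Next I would define, for each $n$, a map $\lambda^\# : C^n(Y;\cM) \to C^n(X;\lambda^*\cM)$ by $(\lambda^\# g)(\sigma) = g(\lambda_n\sigma)$ for $g \in C^n(Y;\cM)$ and $\sigma \in X_n$; this makes sense since $g(\lambda_n\sigma) \in \cM(\lambda_n\sigma) = (\lambda^*\cM)(\sigma)$. The key step is then to check that $\lambda^\#$ is a cochain map, i.e. that $\delta^{n-1}\circ\lambda^\# = \lambda^\#\circ\delta^{n-1}$. Unwinding the definition of the coboundary from Section \ref{sect:CohSimpSet}, both sides evaluated at $\sigma \in X_n$ are alternating sums over $i$ of terms built from $d^i_*$ applied to values of $g$ on faces; the equality reduces to two facts: the simplicial identity $\lambda_{n-1}(d_i\sigma) = d_i(\lambda_n\sigma)$, and the observation that the structure map $d^i_*$ of the coefficient system $\lambda^*\cM$ at $\sigma$ is by definition the structure map $d^i_*$ of $\cM$ at $\lambda_n\sigma$ (both are $\cM$ applied to the morphism $d^i$ in the appropriate simplex category, and $\Delta(\lambda)$ sends one such morphism to the other). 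With these substitutions the two alternating sums become termwise equal.

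Finally, a cochain map induces a homomorphism on cohomology, so I would define $\lambda^* : H^*(Y;\cM)\to H^*(X;\lambda^*\cM)$ to be the map induced by $\lambda^\#$. If one prefers the categorical description of cohomology via Proposition \ref{pro:Isomorphic}, the same map arises from restriction of modules along $\Delta(\lambda)$: the functor $\Delta(\lambda)$ induces an exact restriction functor on module categories carrying the constant functor $\underline R$ for $\Delta(Y)$ to the constant functor for $\Delta(X)$, hence a map on $\Ext^*_{R\Delta(X)}(\underline R, -)$ after applying $\Delta(\lambda)^*$, and one checks this agrees with $\lambda^\#$ under the isomorphism of Proposition \ref{pro:Isomorphic}. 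I do not expect any real obstacle here; the only point requiring care is bookkeeping the identification of the structure maps of $\lambda^*\cM$ with those of $\cM$, which is where one must use that $\Delta(\lambda)$ is a functor and not merely a map on objects.
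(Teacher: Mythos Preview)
Your proposal is correct and follows essentially the same approach as the paper: both define the induced map at the cochain level by $(\lambda^* f)(\sigma)=f(\lambda\sigma)$, note that the value lands in $(\lambda^*\cM)(\sigma)=\cM(\lambda\sigma)$, and pass to cohomology. Your version is simply more explicit in verifying the cochain-map condition and in offering the alternative Ext-theoretic description, whereas the paper records only the definition and the target check.
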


\begin{proof}
The simplicial map $\lambda$ induces a chain map 
\[
\lambda ^* : C^n ( Y, M ) \to C^n (X; \lambda ^* M )
\]
defined by $\lambda ^* (\zeta) (\sigma) = \zeta (\lambda (\sigma) )$
for every simplex $\sigma \in X_n$. Note that 
\[
\lambda ^* (\zeta) (\sigma )=\zeta (\lambda (\sigma)) \in M (\lambda (\sigma) )= (\lambda ^* M ) ( \sigma).
\]
The chain map $\lambda^*$ induces the desired homomorphism between the corresponding cohomology modules.
\end{proof}


\subsection{The Thomason Cohomology of a Category}\label{sect:ThCoh}

The nerve $N\cC$ of the category $\cC$ is a simplicial set whose $n$-simplices $N\cC_n$ are given by the length $n$ chains of composable morphisms $\sigma = \bigl (\sigma(0) \maprt{\sigma_1} \cdots \maprt{\sigma_n} \sigma(n) \bigr )$ in $\cC$. For $n=0$, we take $NC_0$ to be the set of objects in $\cC$. We can consider an $n$-simplex in $N\cC$ as a functor $\sigma : [n]\to \cC$. Then for every $f:[m]\to [n]$, the induced map $f^* : N\cC_n \to N\cC_m$ is defined by $f^* (\sigma) =\sigma f$.

The face and degeneracy maps for $N\cC$ can be described explicitly as follows: For $n \geq 1$, the face map $d_i : N\cC_n \to N \cC_{n-1}$, $0 \leq i\leq n$, is defined by 
\[
d_i ( \sigma(0) \maprt{\sigma_1} \cdots \maprt{\sigma_n} \sigma(n))=\begin{cases} \sigma(1) \maprt{\sigma_2} \cdots \maprt{\sigma_n} \sigma(n) & \text{ if } i=0 \\
\sigma(0) \maprt{\sigma_1} \cdots \sigma(i-1) \maprt{\sigma _{i+1} \sigma_i } \sigma(i+1) \cdots \maprt{\sigma_n} \sigma(n)  & \text{ if } 0< i< n \\
\sigma(0) \maprt{\sigma_1} \cdots \maprt{\sigma_{n-1}} \sigma(n-1) & \text{ if } i=n. 
\end{cases}
\]
The degeneracy map $s_i : N\cC _{n} \to N\cC _{n+1}$, $0 \leq i\leq n$, is defined by 
\[
s_i ( \sigma(0) \maprt{\sigma_1} \cdots \maprt{\sigma_n} \sigma(n))= \sigma(0) \maprt{\sigma_1} \cdots \sigma(i) \maprt{\id} \sigma(i) \cdots  \maprt{\sigma_n} \sigma(n).
\]

\begin{definition}\label{def:ThomasonCoh} Let $\cC$ be a small category.  
Given a coefficient system  $M : \Delta (N\cC)\to R$-mod on the nerve $N\cC$,  the \emph{Thomason cohomology} of $\cC$ with coefficients in $M$ is defined by 
\[
H^\bullet _{Th} (\cC; M ):=H^\bullet (N\cC; M ).
\]
\end{definition}

By the definition of the Gabriel-Zisman cohomology, the Thomason cohomology $H_{Th} ^{\bullet} (\cC; M)$ is the cohomology of the cosimplicial $R$-module $C(\cC; M)$ where for every $n\geq 0$,  
\begin{equation}\label{eqn:CosimplicialModuleForC}
C^n(\cC; M) = \prod_{\sigma \in N\cC_n} M(\sigma)
\end{equation}
together with the face and degeneracy maps defined as in Definition 
\ref{def:GabrielZisman}.

The Thomason cohomology recovers the ordinary cohomology of a small category $\cC$ in the following sense: Let $l:\Delta(N\cC) \to \cC$ be the last-vertex functor sending $\sigma \in N\cC_n$ to $\sigma(n)$. Then for every $R\cC$-module $M: \cC \to R\text{-Mod}$, there is an isomorphism:
\[
H^\bullet (\cC; M) \cong H_{Th}^\bullet (\cC; l^*M).
\]

The Thomason cohomology also recovers the Baues-Wirsching cohomology of a category. For a small category $\cC$, the factorization category $F\cC$ is the category whose objects are the morphisms $\alpha : x\to y$ in $\cC$, and whose morphisms $(u, v): \alpha \to \alpha'$  are given by a pair of morphisms $(u, v)$ in $\cC$ such that the following diagram commutes: 
\[
\xymatrix{x  \ar[r]^{\alpha}  & y \ar[d]^u\\ x' \ar[u]^v \ar[r]^{\alpha'}  & y'}
\]
A functor $M: F\cC \to R$-Mod is called a \emph{natural system} for $\cC$ over $R$. The Baues-Wirsching cohomology $H^\bullet _{BW} (\cC; M)$ of a small category $\cC$ with coefficients in a natural system $M$ is defined as the ext-group $\Ext ^\bullet _{RF\cC} ( \underline R, M)$. It can be also defined as the cohomology of an explicit cochain complex (see \cite{Baues-Wirsching-1985}).

There is a functor $\xi: \Delta (NC) \to F\cC$ that takes an $n$-simplex $\sigma =(\sigma (0) \maprt{\sigma_1} \cdots \maprt{\sigma_n}  \sigma (n))$ in $N\cC$ to the morphism $\sigma_n \cdots \sigma _1: \sigma(0) \to \sigma(n)$ in $\cC$. 
For every natural system $M: F\cC \to R\text{-Mod}$, there is an isomorphism: $H^\bullet _{BW} (\cC ; M) \cong H_{Th}^\bullet (\cC; \xi^*M)$ (see \cite[Theorem 2.1]{GNT-Thomason-2013}).


\section{Bisimplicial Objects and the Dold-Puppe Theorem}\label{sect:DoldPuppe}

In this section we give some necessary definitions related to bisimplicial objects and homotopy colimits, and state some theorems that we use in the paper.

\subsection{Homotopy Colimits}
Let $\Delta$ denote the simplex category. A \emph{bisimplicial object} in the category $\cC$ is a functor $X: \Delta ^{op}\times \Delta ^{op} \to \cC$.  For every $p, q \geq 0$, we denote the object  $X([p] \times [q])$ by $X_{p,q}$.  The simplicial objects $X_{p, \bullet}$ and $X_{\bullet, q}$ are called vertical and horizontal simplicial objects associated to $X$. The \emph{diagonal of a bisimplicial object} is the simplicial object defined by
\[
\diag X : \Delta ^{op} \to \Delta ^{op} \times \Delta ^{op} \to \cC
\]
where the first functor is the diagonal functor defined by $[n] \to ([n], [n])$. 

A \emph{bisimplicial set} is a bisimplicial object in sets. We define homotopy colimits of simplicial sets using the notation and terminology in \cite{Kahn-2011}. Let $\cD$ be a small category and $F:\cD \to \mathbf{sSet}$ a functor from $\cD$ to the category of simplicial sets. The \emph{simplicial replacement of $F$} is the bisimplicial set $N(\cD ; F)$ 
such that for $p, q\geq 0$,
\[
N(\cD; F)_{p, q} =\coprod _{\sigma = (\sigma(0)\xrightarrow{\sigma_1}  \cdots \xrightarrow{\sigma _p} \sigma(p) ) \in N\cD _p }  F( \sigma(0) )_q 
\]
with vertical and horizontal induced maps defined as follows: 
The vertical face
and degeneracy maps are defined by those of $F(\sigma(0))$. The horizontal face and degeneracy maps are defined such that for every $(\sigma, \tau)\in N(\cD; F)_{p,q}$,
\[
d^h_0 (\sigma ,\tau) = (d_0 \sigma , F(\sigma_1) \tau), \quad d^h_i(\sigma ,\tau) = (d_i \sigma ,\tau) \text{ for } 0 < i \leq p
\]
and $s^h_i (\sigma ,\tau) = (s_i \sigma, \tau)$ for all $0 \leq i \leq p$.

\begin{definition} Let $F: \cD \to \mathbf{sSet}$ be a functor. 
The \emph{homotopy colimit} $\hocolim _{\cD} F$ is the diagonal $\diag N(\cD; F)$ of the simplicial replacement of $F$ (see \cite[\S 12.5.2]{Bousfield-Kan-Book-1987}).
\end{definition}


\subsection{The Dold-Puppe Theorem}

A simplicial object in $R$-modules is called a \emph{simplicial $R$-module}. A functor $A: \Delta ^{op} \times \Delta ^{op} \to R$-Mod is called a \emph{bisimplicial $R$-module}. For a bisimplicial $R$-module $A$, we define the associated Moore double complex $A_{\bullet, \bullet}$ to be the double complex with horizontal and vertical boundary maps
\[
\partial ^h _p =\sum _{i=0} ^p (-1)^i d_i ^h \ \text{ and } \ \partial ^v _q =\sum _{j=0} ^q (-1)^j d_j ^v.
\]
The total complex of the double complex $A_{\bullet, \bullet}$ is the chain complex $\Tot (A_{\bullet, \bullet})$ with
\[
\Tot (A_{\bullet, \bullet}) _n =\bigoplus _{p+q=n} A_{p, q} 
\]
where the boundary maps are given by 
\[
\partial _{n} (x) = \partial ^h _p (x)+ (-1)^p \partial_q ^v (x)
\]
for every $x\in X_{p,q}$ with $p+q=n$.  

Another way to get a chain complex from a bisimplicial $R$-module $A$ is to consider the simplicial $R$-module $\diag A$ and take its Moore complex $(\diag A)_\bullet$.
By the Dold-Puppe theorem, these two different constructions give chain complexes that are homotopic.

\begin{theorem}[Dold-Puppe Theorem \cite{Dold-Puppe-1961}]\label{thm:DoldPuppe}
Let $A: \Delta ^{op} \times \Delta ^{op} \to R$-Mod be a bisimplicial $R$-module. Then there is a natural chain homotopy equivalence $(\diag A)_\bullet \to \Tot(A_{\bullet, \bullet})$. This equivalence is natural with respect to morphisms of bisimplicial $R$-modules.
\end{theorem}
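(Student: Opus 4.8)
This is the Eilenberg--Zilber theorem in its bisimplicial-module form: the map $(\diag A)_*\to\Tot(A_{*,*})$ is, up to the sign conventions fixed above, the Alexander--Whitney map, which is a split monomorphism---hence reasonably called an inclusion---and the claim is that it is a chain homotopy equivalence, with the shuffle (Eilenberg--Zilber) map as homotopy inverse. The plan is to prove it by the method of acyclic models, which produces the naturality statement automatically. I would work in the category of bisimplicial $R$-modules and take as models the representable objects $R\Mor_{\Delta\times\Delta}(-,([p],[q]))$ for $p,q\geq 0$; as a bisimplicial set this representable is the external product $\Delta[p]\times\Delta[q]$, i.e. $([r],[s])\mapsto\Delta[p]_r\times\Delta[q]_s$, and I denote the corresponding free bisimplicial $R$-module by $R[\Delta[p]\times\Delta[q]]$.

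Both $F_1\colon A\mapsto(\diag A)_*$ and $F_2\colon A\mapsto\Tot(A_{*,*})$ are functors from bisimplicial $R$-modules to chain complexes of $R$-modules, and in degree $0$ both equal $A\mapsto A_{0,0}$; I would augment each over this common degree-zero functor, which is consistent with the differentials because $H_0(\diag A)\cong H_0(\Tot A_{*,*})\cong\pi_0 A$. By the Yoneda Lemma, $F_1(A)_n=A_{n,n}\cong\Hom(R[\Delta[n]\times\Delta[n]],A)$ and $F_2(A)_n=\bigoplus_{p+q=n}A_{p,q}\cong\bigoplus_{p+q=n}\Hom(R[\Delta[p]\times\Delta[q]],A)$, so $F_1$ and $F_2$ are free on the chosen models, and one checks these identifications are compatible with the differentials. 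For acyclicity, evaluate on $A=R[\Delta[p]\times\Delta[q]]$: then $\diag A=R[\Delta[p]\times\Delta[q]]$ regarded via the product simplicial set, while $A_{r,s}=R[\Delta[p]_r]\otimes_R R[\Delta[q]_s]$, so $F_2(A)$ is the total complex of the tensor product of the Moore complexes of the simplicial $R$-modules $R[\Delta[p]]$ and $R[\Delta[q]]$. Since $\Delta[p]$, $\Delta[q]$ and $\Delta[p]\times\Delta[q]$ are contractible by simplicial homotopies and the unnormalized-chains functor preserves simplicial homotopy, $F_1(A)$ and the two factor complexes are each chain homotopy equivalent to $R$ concentrated in degree $0$; hence so is $F_2(A)$. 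Thus both augmented functors are acyclic on the models.

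With freeness and acyclicity in hand, the acyclic models theorem yields natural chain maps $(\diag A)_*\to\Tot(A_{*,*})$ and $\Tot(A_{*,*})\to(\diag A)_*$ over the degree-zero augmentation, unique up to natural chain homotopy, with composites naturally chain homotopic to the respective identities; thus $F_1$ and $F_2$ are naturally chain homotopy equivalent. To identify this equivalence with the stated map, I would check that the Alexander--Whitney map is itself an augmentation-preserving natural chain map $F_1\to F_2$; the uniqueness clause then shows it is naturally chain homotopic to the map produced above and hence is a natural chain homotopy equivalence. Equivalently, one can write the shuffle map $\nabla\colon\Tot(A_{*,*})\to(\diag A)_*$ explicitly, verify the formal identity $AW\circ\nabla=\id$, and then invoke acyclic models only to conclude $\nabla\circ AW\simeq\id$.

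The work here is essentially bookkeeping rather than one hard step. The points that need care are: stating precisely the ``free on models'' hypothesis for the category of bisimplicial $R$-modules and verifying the Yoneda identifications intertwine the differentials; and ensuring that $F_1$ and $F_2$ are augmented over literally the same degree-zero functor, so that the acyclic models theorem compares maps over a common augmentation. The acyclicity computation then reduces to contractibility of $\Delta[p]$, $\Delta[q]$, and $\Delta[p]\times\Delta[q]$. If one prefers an explicit, self-contained equivalence, the alternative is to construct the shuffle map and the Eilenberg--MacLane contracting homotopy by hand and verify the two triangle identities directly---this avoids acyclic models at the cost of a longer combinatorial computation, but reproves exactly the same statement, including naturality.
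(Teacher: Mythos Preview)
The paper does not prove this theorem at all: its entire proof is the sentence ``See \cite[Chp IV, Thm 2.4]{Goerss-Jardine-Book-2009} for a proof.'' So there is no approach in the paper to compare yours against; the result is simply imported from the literature.

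Your acyclic-models argument is one of the standard proofs of this (generalized Eilenberg--Zilber) theorem and is correctly set up: the representable bisimplicial $R$-modules $R[\Delta[p]\times\Delta[q]]$ are the right models, the Yoneda identifications give freeness of both $A\mapsto(\diag A)_*$ and $A\mapsto\Tot(A_{*,*})$ in each degree, and acyclicity on models reduces to the contractibility of $\Delta[p]$, $\Delta[q]$, and $\Delta[p]\times\Delta[q]$. Your use of the uniqueness clause to pin down the specific Alexander--Whitney map as a representative of the natural equivalence class is also the correct way to finish. One small caution: the parenthetical that the Alexander--Whitney map is a split monomorphism is not quite right on unnormalized chains (on normalized chains one has $AW\circ\nabla=\id$, making $AW$ a retraction rather than a section), so the word ``inclusion'' in the paper should be read as informal shorthand for ``canonical natural chain map'' rather than taken literally. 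This does not affect the validity of your argument.
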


\begin{proof} See \cite[Chp IV, Thm 2.4]{Goerss-Jardine-Book-2009} for a proof.
\end{proof}

The homology of the total complex $\Tot (A_{\bullet, \bullet} )$ can be calculated by spectral sequences associated to the double complex. 
Depending on the chosen filtration, horizontal or vertical, there are two spectral sequences that converge to the homology of the total complex (see \cite{Weibel-Book-1994}): 
\begin{equation}\label{eqn:SpectSeq1} 
\begin{split}
{}^{I} E ^2 _{p, q} &= H^h _p ( H^v _q ( A_{\bullet, \bullet} )) \Rightarrow H_{p+q} (\Tot (A_{\bullet, \bullet})) \\
 {}^{II} E ^2 _{p, q} &= H^v _p ( H^h _q ( A_{\bullet, \bullet} )) \Rightarrow H_{p+q} ( \Tot (A_{\bullet, \bullet}))
 \end{split}
 \end{equation}
These spectral sequences are natural with respect to morphisms between double complexes. 
As a consequence of this naturality, and by applying the Dold-Puppe theorem, we obtain the following.

\begin{proposition}\label{pro:Pointwise} Let $f: A\to B$ be a morphism of bisimplicial $R$-modules. \begin{enumerate}
\item If for every $q\geq 0$, the induced map $f_* : H^h_\bullet (A_{\bullet, q}) \to H^h _\bullet ( B_{\bullet, q} )$ is an isomorphism, then 
$(\diag f)_* : H_\bullet (\diag A) \to H_\bullet (\diag B)$ is an isomorphism. 
\item If for every $p\geq 0$, the induced map $f_* : H^v _\bullet (A_{p, \bullet} ) \to H^v _\bullet ( B_{p, \bullet} )$ is an isomorphism, then 
$(\diag f)_* : H_\bullet (\diag A) \to H_\bullet (\diag B)$ is an isomorphism.
\end{enumerate}
\end{proposition}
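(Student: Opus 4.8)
The plan is to reduce the statement about the diagonal to a statement about the total complex via the Dold--Puppe theorem (Theorem \ref{thm:DoldPuppe}), and then to compare the two double-complex spectral sequences in \eqref{eqn:SpectSeq1} along the morphism $f$. First I would note that $f : A \to B$, being a morphism of bisimplicial $R$-modules, induces a morphism of associated double complexes $A_{*,*} \to B_{*,*}$, hence a morphism of total complexes $\Tot(A_{*,*}) \to \Tot(B_{*,*})$ and a morphism $\diag f : \diag A \to \diag B$ of diagonal simplicial $R$-modules. Since a bisimplicial $R$-module gives a double complex concentrated in the first quadrant, both spectral sequences of \eqref{eqn:SpectSeq1} converge, for $A$ and for $B$, and $f$ induces a morphism between them that is compatible with these convergences.

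For part (1), the hypothesis says exactly that for every $q$ the chain map $A_{*,q}\to B_{*,q}$ is a quasi-isomorphism in the horizontal direction, i.e. $f$ induces an isomorphism on all the horizontal homology modules $H^h_*(A_{*,*})$. These are (up to the usual reindexing) the $E^1$-terms of the spectral sequence ${}^{II}E$, so $f$ is an isomorphism on ${}^{II}E^1$; since each page is the homology of the previous one and $f$ is compatible with the differentials, an induction on $r$ shows $f$ is an isomorphism on ${}^{II}E^2_{p,q}=H^v_p(H^h_q(A_{*,*}))$ and on every subsequent page. By the comparison theorem for convergent first-quadrant spectral sequences, $f$ then induces an isomorphism on the abutment $H_*(\Tot(A_{*,*}))\to H_*(\Tot(B_{*,*}))$. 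Finally, Theorem \ref{thm:DoldPuppe} supplies a natural chain homotopy equivalence $(\diag A)_*\simeq \Tot(A_{*,*})$, so the square relating the maps $H_*(\diag A)\to H_*(\Tot(A_{*,*}))$ and $H_*(\diag B)\to H_*(\Tot(B_{*,*}))$ commutes with horizontal arrows isomorphisms; since the right vertical arrow has just been shown to be an isomorphism, so is $(\diag f)_*$.

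Part (2) would go through identically, with the roles of the two spectral sequences exchanged: the hypothesis now gives that $f$ is an isomorphism on all vertical homology modules $H^v_*(A_{p,*})$, which are the $E^1$-terms of ${}^{I}E$, hence $f$ is an isomorphism on ${}^{I}E^1$, on every later page ${}^{I}E^2_{p,q}=H^h_p(H^v_q(A_{*,*}))$ and beyond, on the abutment $H_*(\Tot(A_{*,*}))$, and finally --- via the same Dold--Puppe square --- on $H_*(\diag A)$.

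I do not expect a serious obstacle. The one place to be careful is the passage from ``$f$ is an isomorphism on every page'' to ``$f$ is an isomorphism on the abutment'': this uses convergence of the spectral sequences, which is automatic here because the double complexes are first-quadrant, but it should be stated explicitly. Everything else is bookkeeping with the naturality of the Dold--Puppe equivalence and of the spectral sequences of a double complex.
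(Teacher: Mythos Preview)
Your proposal is correct and follows essentially the same approach as the paper, which simply notes that the result follows from the spectral sequences in \eqref{eqn:SpectSeq1} together with Theorem~\ref{thm:DoldPuppe}. Your write-up just unpacks the standard comparison-theorem argument that the paper leaves implicit.
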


\begin{proof} This follows from the spectral sequences above and from Theorem \ref{thm:DoldPuppe}.
\end{proof}


\section{Proof of Theorem \ref{thm:Main1}}\label{sect:Proofmain}

To prove Theorem \ref{thm:Main1}, we will apply an argument used by Cegarra in \cite{Cegarra-2020}. We first recall the terminology used in the statement of Theorem \ref{thm:Main1}.

Let $\varphi : \cC \to \cD$ be a functor between two small categories, and $R$ be a commutative ring. 
The functor $\varphi$ gives rise to a diagram of categories
$\varphi / {-} \colon \cD \to \mathbf{Cat}$ where, for every object $d$ of $\cD$, the
category $\varphi/d$ has objects the pairs $(c,\mu \colon \varphi(c) \to d)$.
A morphism
\[
\alpha \colon (c,\mu) \to (c',\mu ')
\]
in $\varphi/d$ is a morphism $\alpha \colon c \to c'$ in $\cC$ such that
$\mu ' \circ \varphi(\alpha)=\mu$.

If $\beta \colon d \to d'$ is a morphism in $\cD$, then the functor
$\beta _\ast = \varphi/ \beta \colon \varphi/d \to \varphi/d'$ is defined on objects by $\beta_* (c, \mu)= (c, \beta \mu)$, and on morphisms by
\[
\beta_* \bigl ((c, \mu) \xrightarrow{\alpha} (c',\mu') \bigr )=
\bigl ( (c,\beta \mu ) \xrightarrow{\alpha} (c', \beta \mu') \bigr ).
\]

Composing $\varphi / {-}$ with the nerve functor gives a diagram
$N(\varphi/{-}) \colon \cD \to \mathbf{sSet}$. Let
\[
T(\varphi) = \coprod_{\sigma \in N\cD} N(\varphi/\sigma(0))
\]
be its simplicial replacement. The $(p,q)$-bisimplices of $T(\varphi)$ can be identified with  triples $(\sigma,\tau, \mu)$ where
$\sigma \in N\cD _p$, $\tau \in N\cC_q$, and $\mu \colon \varphi(\tau(q)) \to \sigma(0)$
is a morphism in $\cD$. The vertical face
and degeneracy maps are defined on a $(p,q)$-simplex $(\sigma,\tau,\mu)$ by those of $N(\varphi / \sigma (0))$, that is, $d^v _j (\sigma, \tau, \mu) =
 (\sigma, d_j\tau,\mu)$ for $0 \leq j < q$, $d^v_q (\sigma , \tau , \mu)=
(\sigma ,d_q \tau, \mu \varphi (\tau _q ))$, and $s^v_j(\sigma, \tau, \mu)=
(\sigma ,s_j \tau ,\mu)$ for all $0 \leq j \leq q $. Similarly, the horizontal maps are defined by those of
$N\cD$, that is, $d^h_0 (\sigma ,\tau ,\mu) = (d_0 \sigma , \tau , \sigma _1  \mu)$, $d^h_i(\sigma ,\tau ,\mu ) = (d_i \sigma ,\tau ,\mu)$ for $0 < i \leq p$, and $s^h_i (\sigma ,\tau ,\mu ) = (s_i \sigma, \tau,\mu)$ for all $0 \leq i \leq p$.
By definition,
\[
\mathrm{hocolim}_{\cD} N(\varphi/{-}) = \operatorname{diag} T(\varphi).
\]

\begin{lemma} Regarding $N\cC$ as a bisimplicial set constant in the horizontal direction, the map
\[
\kappa \colon T(\varphi) \to N\cC,
\]
defined by 
$\kappa(\sigma,\tau,\mu) = \tau$
is a map of bisimplicial sets and it induces a simplicial map
\[
\kappa \colon \mathrm{hocolim}_{\cD} N(\varphi/{-}) \to N\cC.
\]
\end{lemma}

\begin{proof}
This follows from the face and degeneracy maps for $T(\varphi)$ defined above. Note that since $N\cC$ is constant in the horizontal direction as a bisimplicial set, $\diag N\cC$ is isomorphic to $N\cC$ as a simplicial set.
\end{proof}
 
To prove Theorem \ref{thm:Main1}, we consider the bisimplicial projective 
$R\Delta(N\cC)$-module
\[
\bP=R\coprod_{(\sigma,\tau,\mu)\in T(\varphi)}\mathrm{Hom}_{\Delta(N\cC)}(\tau,-)
\;=\;
\bigoplus_{(\sigma,\tau,\mu)\in T(\varphi)} R\,\mathrm{Hom}_{\Delta(N\cC)}(\tau,-),
\]
and the simplicial projective $R\Delta(NC)$-module
\[
\bQ=R\coprod_{\tau\in N\cC}\mathrm{Hom}_{\Delta(N\cC)}(\tau,-)
\;=\;
\bigoplus_{\tau\in N\cC} R\,\mathrm{Hom}_{\Delta(N\cC)}(\tau,-),
\]
Let $K: \bP \to \bQ$ be the bisimplicial homomorphism induced by $\kappa :T(\varphi)\to N\cC $,
where $\bQ$ is considered constant in the horizontal direction. We also denote 
the simplicial homomorphism induced on diagonals
by $K: \operatorname{diag}\bP\to \bQ$.

\begin{lemma}\label{lem:IsomComplexes} The simplicial homomorphism $K:\operatorname{diag}\bP\to \bQ$ induces isomorphism on homology.
\end{lemma}

\begin{proof} For every $\theta \in N\cC _n$, $\bP_{p,q} (\theta)$ is the free $R$-module with basis given by the set
\[
X_{p,q} (\theta) = \{ (\sigma, \tau,\mu, f) \, | \, (\sigma, \tau, \mu) \in T(\varphi)_{p.q}, f: [q]\to [n],  f^* (\theta)=\tau \}.
\]
Similarly, for every $\theta \in N\cC_n$, $\bQ_q (\theta)$ is a free $R$-module with basis given by 
\[
Y_q(\theta)=\{ (\tau, f) \, |\, \tau \in N\cC_q, f: [q]\to [n] \text{ such that } f^* (\theta )=\tau \}.
\]
For every $\theta \in N\cC_n $, the simplicial homomorphism $K_{\bullet, q} : \bP _{\bullet, q} (\theta) \to \bQ _q (\theta)$ sends the basis element $(\sigma, \tau, \mu, f)$ to $(\tau, f)$. The simplicial homomorphism
$K_{\bullet, q}$ can be described as the map
\[
\bP_{\bullet,q}(\theta)=
R\coprod_{f\in (\Delta^n)_q} N(\varphi \theta f(q) /\cD)
\longrightarrow
R\coprod_{f\in (\Delta^n)_q} * \;=\; R(\Delta^n)_q=\bQ_q(\theta),
\]
where $\Delta^n=\mathrm{Hom}_\Delta(-,[n])$ is the standard simplicial $n$-simplex, and $\varphi \theta f(q) /\cD$ denotes the right comma category for the object $\varphi \theta f(q)$ in $\cD$. For every $f\in (\Delta ^n )_q$, the simplicial map 
$R\,N(\varphi \theta f(q) /\cD)\to R*\cong R$ is a homology isomorphism
since for every object $d$ of $\cD$, the comma category $d/\cD$ has an initial object. 
Hence for every $q\ge 0$ and for every $\theta \in N\cC_n$, the simplicial homomorphism $K_{\bullet, q} (\theta) :\bP_{\bullet,q} (\theta) \to \bQ_q (\theta) $ is a homology isomorphism.
It follows from the Dold--Puppe Theorem
that the induced map on diagonals $\diag K (\theta) :\operatorname{diag}\bP(\theta)\to \bQ(\theta)$ is a homology
isomorphism. Therefore, $K:\operatorname{diag} \bP\to \bQ$ is a homology isomorphism.
\end{proof}

We also observe the following:

\begin{lemma}\label{lem:Contractible}
For every $q>0$, we have $H_q(\bQ)=0$ and $H_0(\bQ)=\underline R$.
Hence $\bQ$ and $\diag \bP$ are both simplicial projective resolutions of $\underline R$ as $R\Delta(N\cC)$-modules and the simplicial homomorphism 
$K :\operatorname{diag}\bP\to \bQ$ is a homotopy equivalence between these simplicial projective resolutions.
\end{lemma}

\begin{proof}
For every
$n$-simplex $\theta:[n]\to \cC$ of $N\cC$, $\bQ(\theta)=R\Delta^n$ which is
contractible since the category $[n]$ has an initial object. Hence $\bQ (\theta)$ has the homology of a point for every $n$-simplex $\theta$ in $N\cC$. The second statement
follows from Lemma \ref{lem:IsomComplexes}.
\end{proof}

Now we are ready to prove Theorem \ref{thm:Main1}.

\begin{proof}[Proof of Theorem \ref{thm:Main1}]
For every given coefficient system $M$ on $N\cC$, let
\begin{equation}\label{eqn:CphiM}
C(\varphi; M)=\prod_{(\sigma,\tau,\mu)\in T(\varphi)} M(\tau)
\end{equation}
be the bicosimplicial $R$-module whose vertical and horizontal face and degeneracy homomorphisms are induced by face and degeneracy maps of $T(\varphi)$.
By definition, we have
\begin{equation}\label{eq:diagC}
\operatorname{diag} C(\varphi;M)=
C \bigl (\operatorname{hocolim}_\cD N(\varphi/{-}); \kappa ^* M \bigr ).
\end{equation}
Regarding $C(\cC;M)$ as a bicosimplicial $R$-module constant in the horizontal direction, let
\[
\kappa ^*:C(\cC;M)=\prod_{\tau\in N\cC} M(\tau)\longrightarrow
\prod_{(\sigma,\tau,\mu)\in T(\varphi)} M(\tau)=C(\varphi;M)
\]
be the bicosimplicial homomorphism induced by the bisimplicial map $\kappa: T(\varphi) \to N\cC$ defined by $\kappa(\sigma, \tau, \mu)=\tau$. Then the induced map on diagonals
\[
\kappa ^*:C(\cC;M)\to \operatorname{diag}C(\varphi;M)
\]
is precisely the map claimed to be a cohomology isomorphism.

To prove this, consider the simplicial projective resolutions $\diag \bP$ and $\bQ$ constructed in Lemma \ref{lem:IsomComplexes}.  There is a commutative diagram of cosimplicial $R$-modules
\[
\xymatrix{\mathrm{Hom}_{\Delta(N\cC)}(\bQ;M) \ar[d]^{\kappa^*} \ar[r]^-{\cong} & C(\cC;M) \ar[d]^{\kappa^*}\\
\mathrm{Hom}_{\Delta(N\cC)}(\diag\bP;M) \ar[r]^-{\cong} & \diag C(\varphi;M)}
\]
where the horizontal isomorphisms are given by the Yoneda Lemma. The induced map $\kappa ^*$ on the left is a cohomology isomorphism because, by Lemma \ref{lem:Contractible}, the simplicial homomorphism $K :\diag \bP \to \bQ$ is a homotopy equivalence between simplicial projective resolutions of $\underline R$.
Hence we conclude that the homomorphism $\kappa^*$ on the right is a cohomology isomorphism.
\end{proof}


\section{A Spectral Sequence for Grothendieck Construction}
\label{sect:SpectSeq}

In this section we construct the spectral sequence for a functor $\varphi: \cC \to \cD$ that converges to the Thomason cohomology of $\cC$. As an application we construct a spectral sequence for the cohomology of the Grothendieck construction of a functor $F : \cC \to \mathbf{Cat}$.

Let $\varphi : \cC \to \cD$ be a functor between two small categories and $M$ be a coefficient system on $N\cC$. For each object $d$ of $\cD$, let
\[
j:\varphi/d\to \cC
\]
be the functor that carries each object $(c,\mu)$ to $c$. 
Let $C(\varphi/-; j^*M)$ denote the functor from $\cD^{op}$ to the category of cosimplicial
$R$-modules, which sends an object $d$ of $\cD$ to $C(\varphi/d; j ^*M)$, and carries a morphism $\beta :d\to d'$ in $\cD$ to the homomorphism
\[
\beta ^*:C(\varphi/d'; j^*M)\to C(\varphi/d; j^*M)
\]
induced by the functor $\beta _*:\varphi/d\to \varphi/d'$. Hence, we can form the bicosimplicial $R$-module
\[
C\bigl(\cD^{op}; C(\varphi/-; j^*M)\bigr)
=
\prod_{\sigma\in N\cD} C(\varphi/\sigma(0); j^* M)
\]
and observe the following:

\begin{lemma} Let $C(\varphi; M)$ be the bicosimplicial
$R$-module defined in (\ref{eqn:CphiM}).  
Then
\[
C(\varphi;M) = C\bigl(\cD^{op}; C(\varphi/-; j^*M)\bigr)
\]
as bicosimplicial $R$-modules.
\end{lemma}

\begin{proof}
This follows by the definition of the bicosimplicial $R$-module $C(\varphi; M)$.
\end{proof}

For each integer $q\ge 0$, let
\[
H^q_{Th}(\varphi/-; j^*M):\cD ^{op}\to R\text{-}\mathrm{Mod}
\]
be the functor that assigns to every object $d$ of $D$, the cohomology $R$-module $H^q_{Th}(\varphi/d, j^*M)$. We prove the following theorem.

\begin{theorem}\label{thm:Main2-SpectSeq}
For every coefficient system $M \colon \Delta(N\cC) \to R\text{-Mod}$, there is a
first-quadrant spectral sequence
\[
E^{p,q}_2
=
H^p\bigl(\cD^{op}; H^q_{Th}(\varphi/{-}; j^\ast M)\bigr)
\;\Longrightarrow\;
H^{p+q}_{Th}(\cC; M)
\]
where for each object $d$ of $\cD$, $j: \varphi/d \to \cC$ is the forgetful functor that sends each object $(c, \mu)$ to $c$.
\end{theorem}

\begin{proof}
By (\ref{eqn:SpectSeq1}), there is a spectral sequence of the form 
\[
{}^{I} E _2 ^{p, q} = H_h ^p ( H_v ^q C(\varphi; M) )\Rightarrow H^{p+q} (\Tot C ^{\bullet, \bullet}(\varphi; M)). 
\]
Fixing any $q\ge 0$ and taking vertical $q$-cohomology in $C(\varphi;M)$ gives
\[
H^q_v C(\varphi;M)\cong C\bigl(\cD^{op}; H^q_{Th}(\varphi/-; j^*M)\bigr).
\]
Taking now the $p$-cohomology of $H_v ^q C(\varphi, M)$ gives
\[
H^p_h H^q_v C(\varphi;M)\cong H^p\bigl(\cD^{op}; H^q_{Th}(\varphi/-; j^*M)\bigr).
\]
By the Dold-Puppe Theorem, $H^{p+q} (\Tot C ^{\bullet, \bullet}(\varphi; M) ) \cong H^* (\diag C(\varphi; M))$.
By \eqref{eq:diagC} and Theorem \ref{thm:Main1}, we have $H^\bullet (\mathrm{diag}\,C(\varphi;M) ) \cong H^\bullet_{Th}(\cC;M)$. Hence
the claimed spectral sequence follows.
\end{proof}

For a particular instance of the spectral sequence above, we obtain a spectral sequence for the Grothendieck construction of a functor $F:\cD \to \mathbf{Cat}$ stated in the introduction as Theorem \ref{thm:GrothSpectSeq}.

\begin{proof}[Proof of Theorem \ref{thm:GrothSpectSeq}]
Suppose $F:\cD\to \mathbf{Cat}$ is
a functor and let $\int_{\cD} F$ be the category obtained by the Grothendieck construction on $F$.
Its objects are pairs $(d,x)$ where $d$ is an object of $\cD$ and $x$ is an object of $F(d)$.
A morphism $(\alpha,\gamma):(d,x)\to (d',x')$ in $\int_{\cD} F$ consists of a morphism $\alpha :d\to d'$ in $\cD$
together with a morphism $\gamma: F(\alpha)(x)\to x'$ in $F(d')$. Let
\[
\pi:\int_{\cD} F\to \cD
\]
denote the forgetful functor that sends each object $(d,x)$ to $d$. For every object $d$ of $\cD$, $j : \pi/d \to \int_{\cD} F$ denotes the functor
that sends an object $((d',x'), \mu)$ in $\pi/d$ to $(d', x')$ in $\int_{\cD} F$. By applying Theorem \ref{thm:Main2-SpectSeq} to $\pi: \int_{\cD} F \to \cD$, we obtain the spectral sequence
in the statement of Theorem \ref{thm:GrothSpectSeq}.
\end{proof}

As a special case of the spectral sequence in Theorem \ref{thm:GrothSpectSeq}, consider the following situation:
Let $G$ be a group and $X$ be a $G$-simplicial set. In this case 
the simplex category $\Delta (X)$ is a $G$-category. Let $\cG$ denote the one object group category whose morphisms are given by the group $G$, and let  $F : \cG \to \mathbf{Cat}$ be the functor that sends the object $\ast$ in $\cG$ to $\Delta (X)$. We denote the Grothendieck construction $\int _{\cG} F$ by $\Delta (X)_G$.

The objects of $\Delta(X)_G$ are the pairs $(\ast, \sigma)$ where $\sigma\in \Delta(X)$. A morphism $(\ast, \tau) \to (\ast, \sigma)$, where $\sigma\in X_n$ and $\tau\in X_m$, is given by a pair $(g, f)$ where $g\in G$ and $f: [m]\to [n]$ is a morphism in the simplex category $\Delta$ such that $f^* (\sigma)= g \tau$. A functor $M : \Delta (X)_G\to R$-Mod is called a \emph{$G$-equivariant coefficient system on $X$}.  

Consider the forgetful functor $\pi: \Delta (X)_G \to \cG$ that sends every object $(\ast, \sigma)$ to $\ast$, and every morphism $(g, f)$ to $g\in G$.
If we apply Theorem \ref{thm:GrothSpectSeq} to the category $\Delta (X)_G$, we obtain the following corollary.

\begin{corollary}\label{cor:Main3-GenSerreSpect}
Let $X$ be a $G$-simplicial set and $M$ an equivariant coefficient system on $X$. Then there is a spectral sequence
\[
E_2 ^{p,q} =H^p ( G ; H ^q _{M} ) \Rightarrow H^{p+q} _{Th} ( \Delta(X)_G; M )
\]
where $H^p (G; H_M ^q)$ denotes the cohomology of the group $G$ with coefficients in the $RG$-module $H_M ^q := H^q _{Th} ( \pi /*; j^* M )$.
\end{corollary}

\begin{remark} As a consequence of Thomason's homotopy colimit theorem, the nerve of the category $\Delta (X)_G$ is homotopy equivalent to 
\[
\hocolim _{\cG} N\Delta (X)
\]
which is homotopy equivalent to the Borel construction $EG \times_G X$ (see \cite[Lemma 2.3]{Grodal-Endo}). This means that when the coefficient system $M$ is the constant coefficient system $\underline R$, the above spectral sequence becomes the usual Serre spectral sequence
\[
E_2 ^{p,q} =H^p(G ; H^q (X; R)) \Rightarrow H^{p+q}  ( EG\times _G X ; R).
\]
In this sense the above spectral sequence can be considered as the generalization of the Serre spectral sequence to the cohomology of $\Delta (X)_G$ with arbitrary $G$-equivariant coefficient systems.
\end{remark}


\section{Quillen's Theorem A for Thomason Cohomology} 

In this section we prove Theorem \ref{thm:Main2-QuillenA} stated in the introduction. For every object $d$ of $D$, let 
\[
u:\varphi/d\to \mathrm{id}_\cD/d = \cD/d
\]
be the forgetful functor that carries each object $(c,\mu)$ to $(\varphi(c), \mu)$. The square
\[
\xymatrix{ \varphi/d \ar[r]^{j} \ar[d]_{u}  & \cC \ar[d]^{\varphi} \\
\cD/d \ar[r]^{j} & \cD.}
\]
commutes and, for each coefficient system $M:\Delta(N\cD)\to R\text{-}\mathrm{Mod}$, the functor $u$ induces
a cosimplicial homomorphism
\begin{equation}\label{eq:u-star}
u^*:C(\cD/d; j^*M)\to C(\varphi/d; u^*j^*M)=C(\varphi/d;j^*\varphi^*M),
\end{equation}
and therefore a homomorphism
\begin{equation}\label{eq:u-star-cohom}
u^*:H^\bullet_{Th}(\cD/d; j^*M)\to H^\bullet_{Th}(\varphi/d;j^*\varphi^*M).
\end{equation}

\begin{proof}[Proof of Theorem \ref{thm:Main2-QuillenA}]
The cosimplicial homomorphisms $u^*$ in \eqref{eq:u-star} are natural in the objects $d$ of $\cD$. Define a
bicosimplicial homomorphism
\[
C(\mathrm{id}_\cD;M)
=
C\bigl(\cD^{op};C(\cD/-;j^*M)\bigr)
\;\xrightarrow{\ u^*\ }\;
C\bigl(\cD^{op};C(\varphi/-;j^*\varphi^*M)\bigr)
=
C(\varphi;j^*\varphi^*M).
\]
Since by hypothesis, at every $p\ge 0$, the cosimplicial homomorphism
\[
\prod_{\sigma\in N\cD_p} C(\cD/\sigma(0);j^*M)
\;\xrightarrow{\ u^*\ }\;
\prod_{\sigma\in N\cD_p} C(\varphi/\sigma(0);j^*\varphi^*M)
\]
is a cohomology isomorphism, it follows from
Proposition \ref{pro:Pointwise} that the induced map on diagonals
\[
\mathrm{diag}\,C(\mathrm{id}_\cD;M)\xrightarrow{\ u^*\ }\mathrm{diag}\,C(\varphi;\,j^*\varphi^*M)
\]
is also a cohomology isomorphism. Hence, by \eqref{eq:diagC},
\[
C\bigl(\mathrm{hocolim}_\cD N(\cD/-); j^*M\bigr)
\;\xrightarrow{\ u^*\ }\;
C\bigl(\mathrm{hocolim}_\cD N(\varphi/-); j^*\varphi^*M\bigr)
\]
is a cohomology isomorphism. Now the result follows from the commutativity of the square of cosimplicial
homomorphisms below, where the vertical homomorphisms $\kappa ^*$ are cohomology isomorphisms by Theorem \ref{thm:Main1}:
\[
\xymatrix{C(\cD;M) \ar[r]^{\varphi^*} \ar[d]^{\kappa^*} & C(\cC;\varphi^*M) \ar[d]^{\kappa^*} \\
C\bigl(\mathrm{hocolim}_\cD N(\cD/-); j^*M\bigr) \ar[r]^-{u^*} & C\bigl(\mathrm{hocolim} _{\cD}N(\varphi/-); j^*\varphi^* M \bigr).}
\]
\end{proof}

Quillen's Theorem A states that if $\varphi: \cC \to \cD$ is a functor such that for every object $d$ of $\cD$, the comma category $\varphi/d$ is contractible, then
$N\varphi : N\cC\to N\cD$ is a homotopy equivalence. When $\varphi /d$ is contractible, the simplicial map $Nu: N(\varphi/d) \to N(\id_{\cD}/d )$ is a homotopy equivalence, but in general it may not 
induce an isomorphism on cohomology with arbitrary coefficients. This means that  under the hypothesis of Quillen's Theorem A, in general we cannot conclude that the induced map $\varphi ^*$ on cohomology is an isomorphism.

The following example due to Husainov \cite{Husainov-2023} shows that
under the hypothesis of Quillen's Theorem A, the induced map  $\varphi ^*: H_{Th}^\bullet (\cD; M) \to H_{Th}^\bullet ( \cC; \varphi^* M )$ is not an isomorphism for some coefficient systems.

\begin{example} Suppose $\cC$ is the category with one object $\{0\}$ and with only the identity morphism $\id_0$, and $\cD$ is the category with objects $\{0, 1\}$ and with one non-identity morphism $\alpha: 0\to 1$. Let $\varphi : \cC \to \cD$ be the inclusion functor. Then for every object $d$ of $\cD$, the comma category $\varphi/d$ is contractible. Consider the functor between the factorization categories $F\varphi: F\cC \to F\cD$.  
Let $M$ be the natural system for $\cD$ over $\bbZ$ with values $M(\id _0)=M(\id _1)=0$ and $M(\alpha)\cong\bbZ$. 
It is easy to see that $H^1 _{BW} ( \cC; M)=0$ and $H^1 _{BW} (\cD; M)\cong \bbZ$, so the homomorphism induced by $\varphi$ on Baues-Wirsching cohomology is not an isomorphism (see \cite[Example 5.3]{Husainov-2023} for details). Hence we can conclude that the induced map on Thomason cohomology $\varphi ^* : H^\bullet_{Th} (\cD; M) \to H^\bullet _{Th} (\cC; \varphi^* M)$ is not an isomorphism for some coefficient systems $M$ on $N\cD$.
\end{example}

\begin{remark} In \cite{Husainov-2023}, Husainov gives  counterexamples to some statements that appear in the literature. Husainov's examples show that for an adjoint pair $(l, r)$ such that $l: \cC \rightleftarrows \cD : r$, the induced map 
\[
l^*: H^*_{Th} (\cD; M) \to H^*_{Th} (\cC; l^* M )
\]
is not always an isomorphism. His examples show in particular that \cite[Lem 2.2]{Pirashvili-Redondo-2006} and \cite[Prop 1.11 and Cor 2.3]{GNT-Thomason-2013} do not hold in the way they are stated.  
\end{remark}


\section*{Acknowledgements and Funding Declaration}
The first author is supported by T\" UB\. ITAK 2211-A National PhD Scholarship Program. The second author is supported by T\" UB\. ITAK 2219-International Postdoctoral Research Fellowship Program (2023, 2nd term). We gratefully acknowledge T\" UB\. ITAK for its support of this research. 

We thank the referee for a careful reading of the paper, for pointing out a mistake in an earlier version, and for detailed suggestions that significantly improved the paper. We also thank Matthew Gelvin and Caroline Yal\c c\i n for reading an earlier version of the paper and for their helpful comments.

\section*{Author Contributions Statement}
All authors contributed to the study conception and design.  The first draft of the manuscript was written by both authors and all authors commented on previous versions of the manuscript. All authors read and approved the final manuscript.

\section*{Data Availability}
 This manuscript does not report data generation or analysis.
 
\section*{Conflict of Interest}

The authors declare that they have no conflicts of interest.

\section*{Ethics Declaration}

Not applicable.


\end{document}